\newcommand{\stsets}[1]{\mathbb{#1}}
\newcommand{\R}{\stsets{R}}
\theoremstyle{definition}
\newtheorem{definition}{Definition}
\theoremstyle{remark}
\newtheoremstyle{mytheorem}{0.5cm}{0.2cm}{\slshape}{ }{\bfseries}{.}{ }{}
\theoremstyle{mytheorem}
\newtheorem{Th}[definition]{Theorem}
\newtheorem{Lem}[definition]{Lemma}
\renewcommand{\P}{\mathbf{P}}
\DeclareMathOperator{\E}{{\bf E}}
\DeclareMathOperator{\var}{{\bf var}}
\DeclareMathOperator{\cov}{{\bf var}}
\DeclareMathOperator{\one}{{ 1\hspace*{-0.55ex}I}}
\newcommand{\thru}{,\dotsc,}
\newcommand{\bydef}{\stackrel{{\rm def}}{=}}
\renewcommand{\epsilon}{\varepsilon}
\renewcommand{\phi}{\varphi}
\newcommand{\ti}{\to\infty}
\newcommand{\ssp}{\hspace{0.5pt}}
\newcommand{\seg}{see, \hbox{e.\ssp g.,}}
\newcommand{\ou}{\overline{u}}
\newcommand{\om}{\overline{m}}
\newlength{\querylen}
\begin{document}

\title{
Functional central limit theorems for occupancies and missing mass
process in infinite urn models}
\author{Mikhail Chebunin\thanks{
    Sobolev Institute of Mathematics SB RAS and Novosibirsk State
    University, Novosibirsk, Russia, E-mail: chebuninmikhail@gmail.com} \and
Sergei Zuyev \thanks{Chalmers University of Technology, 
Gothenburg, Sweden. E-mail: sergei.zuyev@chalmers.se}}
\date{\today}
\maketitle

\begin{abstract}
We study the infinite urn scheme when the balls are sequentially
distributed over an infinite number of urns labelled 1,2,\dots so that
the urn $j$ at every draw gets a ball with probability $p_j$, $\sum_j p_j=1$. 
We prove functional central limit theorems for discrete time and the
poissonised version for the urn occupancies
process, for the odd-occupancy and for the missing mass processes
extending the known non-functional central limit theorems.

\textbf{Keywords:} infinite urn scheme, regular variation, functional CLT, occupancy
process, missing mass process.

\textbf{2010 Mathematics Subject Classification.} Primary: 60F17, 60G22; Secondary: 60G15, 60G18
\end{abstract}

\section{Introduction}
In this paper we study the following classical urn model first
considered by Karlin~\cite{Karlin}: $n\geq 1$ balls are distributed
one by one over an infinite number of urns enumerated from 1 to
infinity. The ball distributed at step $j=1,2\dots$, call it $j$th
ball, gets into urn $i$ with probability $p_i$,
$\sum_{i=1}^\infty p_i=1$, independently of the other balls. Such
multinomial occupancy schemes arise in many different applications,
in Biology \cite{GooTou:56}, Computer science \cite{M-Z}, \cite{OSZ}
and in many other areas, \seg~\cite{Gnedin} and the references
therein.

Let $X_j$ be the urn the $j$th ball gets into and let $J_i(n)$
be the number of balls the $i$th urn contains after $n$ balls are
distributed:
\begin{displaymath}
  J_i(n)=\sum_{j=1}^{n} \one_{X_j=i}.
\end{displaymath}
Of a particular interest is the asymptotic behaviour of the following
quantities: the
number of urns containing at least $k\geq1$ balls and containing
exactly $k$ balls: 
\begin{equation}\label{def1}
R^{*}_{n,k}=\sum_{i=1}^{\infty} \one_{J_i(n)\ge k},\quad
R_{n,k}=\sum_{i=1}^{\infty} \one_{J_i(n)= k}=R^{*}_{n,k}-R^{*}_{n,k+1}, 
\end{equation}
the number of urns with an odd number of balls and the \emph{scaled
  missing mass} introduced in \cite{Karlin}:
\begin{equation}\label{def2}
U_n=\sum_{i=1}^\infty \one_{J_i(n)\equiv 1\ (\mathrm{mod} \ 2)},
\quad M_n=n \sum_{i=1}^\infty p_i \one_{J_i(n)=0}, 
\end{equation}
We also use notation $R_n\bydef R^{*}_{n,1}=\sum_{k\ge 1} R_{n,k}$ for
the number of non-empty urns. Renumbering the urns if necessary, we
may assume that the sequence $(p_i)_{i\geq1}$ is monotonely
decaying. We further assume that it is regularly varying:
\begin{equation}
\label{al}
\alpha(x)=\max\{i:\ p_i\geq 1/x\}=x^{\theta} L(x) \  \textrm{with
  $\theta\in [0,1]$,} 
\end{equation}
where $L(x)$ is a slowly varying function as $x\ti$.

Following Karlin's \cite{Karlin} original approach, we will consider a
Poissonised version of the model when the balls are put into urns at
the times of jumps of a homogeneous Poisson point processes
$\Pi(s), s\geq0$ with intensity 1 on $\R_+$.  According to the
independent marking theorem for Poisson processes,
$\{J_i(\Pi(s))\bydef\Pi_i(s), \ s\geq 0\}$ are independent homogeneous
Poisson processes with intensities $p_i$. To ease the notation, we
write simply
\begin{displaymath}
  R(s)\bydef R^{*}_{\Pi(s),1},\ U(s)\bydef U_{\Pi(s)},
\end{displaymath}
and we introduce the following poissonised version of the scaled missing mass:
\begin{displaymath}
  M(s)\bydef s \sum_{i=1}^\infty p_i \one_{\Pi_i(s)=0}. 
\end{displaymath}
It differs from $M_{\Pi(s)}$ by the scaling
factor $s$ vs.\ $\Pi(s)$, but, when properly scaled, it is
asymptotically equivalent to it.

Ordinary (not functional) central limit theorems for the above
quantities were established under various conditions in \cite{BG},
\cite{B-H}, \cite{Dutko}, \cite{Gnedin},
\cite{Karlin}, \cite{M-Z}, \cite{OSZ}. In particular, under rather
general conditions on the sequence $(p_i)$ involving an unbounded
growth of the variances, the following results are available: a strong
law of large numbers and asymptotic normality of $R_n$, an asymptotic
normality of the vector $(R_{n,1}\thru R_{n,\nu})$, local limit
theorems, etc.

We acknowledge a novel method of a randomised
decomposition for proving FCLTs developed in a recent paper
\cite{Durieu}, but we do not use it here. As a particular case of
their Theorem~2.3, a FCLT holds for the processes $R_n$ and $U_n$ when
$\theta\in(0,1)$. 

Our goal here is to establish a FCLT for the triplet of processes: the
occupancy, odd-occupancy and the scaled missing mass when
$\theta\in(0,1]$. In particular, we obtain previously unknown FCLT for
$U_n$ for $\theta=1$ and for $M_n$ when $\theta\in(0,1]$. Up to a
normalising constant, the FCLT stated in Theorem~\ref{Th1} also holds
for the original (non-scaled) missing mass
$\sum_{i=1}^\infty p_i \one_{J_i(n)=0}$ on any interval
$t\in[\epsilon,1]$, $\epsilon>0$, separated from~0.
The paper extends the results of \cite{Chebunin2} and
\cite{Chebunin3}, where a functional central limit theorem (FCLT) was
shown under condition \eqref{al} for the vector process\\
$(R^*_{[nt],1},R^*_{[nt],2},\dots, R^*_{[nt],\nu})_{t\in[0,1]}$ in
the case $\theta\in(0,1]$.

Extending the FCLT to the case $\theta=0$ would require additional to
\eqref{al} conditions. As it was mentioned in \cite{Karlin} and in
\cite{BG}, $\theta=0$ does not imply that the variances grow to
infinity and various asymptotic behaviour is possible for different
statistics. We also argue that even an infinite growth of variances
does not guarantee \emph{per se} the required relative compactness.

When $\theta=1$, we need a function
\begin{displaymath}
L^*(x)=\int_0^\infty L(xs) e^{-s}s^{-1} ds.
\end{displaymath}
It is known (see \cite{Karlin}) that $L^*(x)$ is slowly varying when
$x \to \infty$. 

Finally,  for $t\in [0,1]$ introduce the following notation:
\begin{align}\label{eq:betaR}
\beta(n)& =\left\{
\begin{array}{ll}
\alpha(n), & \theta\in[0,1); \\
nL^*(n), & \theta=1,
\end{array}
\right.
\quad
& R_n(t) =\frac{R_{[nt]}-\E R_{[nt]}}{(\beta(n))^{1/2}},\\
\label{eq:UM}
U_n(t) & =\frac{U_{[nt]}-\E U_{[nt]}}{(\beta(n))^{1/2}}, &
M_n(t) =\frac{M_{[nt]}-\E M_{[nt]}}{(\alpha(n))^{1/2}}.
\end{align}

We are now ready to formulate the main result of the paper.

\begin{Th} \label{Th1}
When $\theta\in(0, 1]$, the vector process
\begin{displaymath}
(R_n(t),U_n(t),M_n(t)),\quad t\in[0,1],
\end{displaymath}
converges weakly in the uniform metric on $D([0,1]^3)$ to a
3-dimensional Gaussian process $(\rho(t),\upsilon(t),\mu(t))$ with zero
mean and the covariance function $c(\tau,t)$ 
with the following components: when $\theta\in(0,1), \ \tau\le t$,
\begin{align*}
c_{\rho\rho}(\tau,t)& =\Gamma(1-\theta)((\tau+t)^\theta-t^\theta), \\
c_{\upsilon\upsilon}(\tau,t) & =
\Gamma(1-\theta)2^{\theta-2}((t+\tau)^\theta-(t-\tau)^\theta), 
\end{align*}
\begin{align*}
c_{\mu\mu}(\tau,t)&=\theta\Gamma(2-\theta) \left(
                    \frac{\tau}{t^{1-\theta}}-\frac{t\tau}{(t+\tau)^{2-\theta}}
                    \right), \\ 
c_{\rho\upsilon}(\tau,t)&=\Gamma(1-\theta)((2t+\tau)^{\theta} - (2t-\tau)^{\theta})/2, \\
c_{\rho\upsilon}(t,\tau)&=\Gamma(1-\theta)((2t+\tau)^\theta - t^\theta)/2,\\
c_{\rho\mu}(\tau,t)&=\theta \Gamma(1-\theta)\left( \frac{t}{(t+\tau)^{1-\theta}} - t^\theta \right), \\
c_{\rho\mu}(t,\tau)&=\theta \Gamma(1-\theta)\left( \frac{\tau}{(t+\tau)^{1-\theta}} -\frac{\tau}{ t^{1-\theta}} \right),\\
c_{\mu\upsilon}(\tau,t)&=\theta \Gamma(1-\theta)\left( \frac{\tau}{2(2t+\tau)^{1-\theta}} -\frac{\tau}{2(2t-\tau)^{1-\theta}} \right),\\
c_{\mu\upsilon}(t,\tau)&=\theta \Gamma(1-\theta)\left(
                \frac{t}{2(2\tau+t)^{1-\theta}} -\frac{t^\theta}{2}
                \right).
\end{align*}
When $\theta=1$, $ \tau\le t$, $c(\tau,t)$ is
given by
\begin{align*}
  c_{\rho\rho}(\tau,t)&= \tau, \ c_{\upsilon\upsilon}(\tau,t)=2\tau, \ c_{\mu\mu}(\tau,t)=\tau^2, \\
  c_{\rho\upsilon}(\tau,t)&=\tau, \ 
c_{\rho\upsilon}(t,\tau)=(t+\tau)/2,\\
c_{\rho\mu}(\tau,t)&=c_{\rho\mu}(t,\tau)=c_{\upsilon\mu}(\tau,t)=c_{\upsilon\mu}(t,\tau)=0.
\end{align*}
\end{Th}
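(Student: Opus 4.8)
The plan is to prove the theorem first for the poissonised processes and then de-poissonise. In the Poisson model the marking theorem makes the counts $\Pi_i(s)\sim\Po(p_is)$ independent across $i$, so that for every fixed $s$ each of $R(s)$, $U(s)$, $M(s)$ is a sum over $i$ of independent random variables, bounded indicators for $R,U$ and $p_i^2$-controlled for $M$. I would set up the rescaled poissonised triplet $(\tilde R_n(t),\tilde U_n(t),\tilde M_n(t))$ using the centrings and normalisations of \eqref{eq:betaR}--\eqref{eq:UM} but with the Poisson clock $\Pi(nt)$ replaced by the deterministic time $nt$, prove its FCLT, and postpone the return to $(R_n,U_n,M_n)$ to the final step.

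For the finite-dimensional distributions I would fix times $t_1<\dots<t_m$ and apply the multivariate Lindeberg--Feller CLT to the array indexed by $i$. Since the $R$- and $U$-summands are indicators and the $M$-summands are $p_i^2$-bounded, the Lindeberg condition reduces to negligibility of the largest summand variance, which follows from $p_i\to0$ and the divergence of the total variance under \eqref{al}. The real content is the limiting covariance. For a single pair of times the monotonicity of $s\mapsto\Pi_i(s)$ and the independence of Poisson increments reduce each joint expectation to an elementary computation: the per-urn covariance of $R$ at times $n\tau\le nt$ equals $(1-e^{-p_in\tau})e^{-p_int}$, and that of $U$ equals $\tfrac14(e^{-2p_in(t-\tau)}-e^{-2p_in(t+\tau)})$. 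Summing over $i$ I would invoke the Karlin-type Tauberian asymptotics: for suitable $f$ vanishing at $0$ and $\infty$, under \eqref{al} with $\theta\in(0,1)$, $\sum_i f(p_in)\sim\theta\,\alpha(n)\int_0^\infty f(x)x^{-\theta-1}\,dx$. Taking $f(x)=(1-e^{-\tau x})e^{-tx}$ and evaluating the Mellin integral by parts yields $c_{\rho\rho}(\tau,t)=\Gamma(1-\theta)((t+\tau)^\theta-t^\theta)$, and the analogous choices reproduce every listed entry, the extra factor $p_i^2$ in $M$ shifting the exponent and turning $\Gamma(1-\theta)$ into $\Gamma(2-\theta)$.

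Tightness in $D([0,1])$ I would verify componentwise, since tightness of the $\R^3$-valued process is equivalent to tightness of each scalar coordinate. Only $R$ is monotone in $t$, so its tightness is inherited from \cite{Chebunin2,Chebunin3}. For the non-monotone $U$ and $M$ I would prove a Billingsley-type moment bound on adjacent increments, $\E[(\Delta_1)^2(\Delta_2)^2]\le C(t_3-t_1)^2$ with $\Delta_1=\tilde U_n(t_2)-\tilde U_n(t_1)$ and $\Delta_2=\tilde U_n(t_3)-\tilde U_n(t_2)$, and likewise for $M$; expanding this fourth mixed moment over quadruples of urns, independence across urns annihilates all terms except those with repeated indices, and the surviving sums are controlled by the same regular-variation estimates. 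As the limiting covariance is continuous, this gives tightness with a continuous limit and hence convergence in the uniform metric.

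The two points I expect to be the main obstacles are the boundary case $\theta=1$ and the de-poissonisation, which are intertwined. At $\theta=1$ the Mellin integrals $\int_0^\infty f(x)x^{-2}\,dx$ for $R$ and $U$ diverge at the origin, so the clean Tauberian asymptotic fails and must be replaced by a boundary analysis built on the slowly varying $L^*$; this is exactly why $\beta(n)=nL^*(n)$ appears and why the $\theta=1$ covariances degenerate to the simpler forms such as $c_{\rho\rho}(\tau,t)=\tau$. The de-poissonisation itself I would handle by coupling the discrete and Poisson clocks through the arrival times, using $\Pi(n)/n\to1$ and $\Pi(n)-n=O_{\P}(\sqrt n)$ together with the increment bounds to pass both the fidi limits and tightness to $(R_n,U_n,M_n)$; the delicate part is that the correction is genuinely not negligible for $M$, where the prefactor $s$ versus $\Pi(s)$ in its definition feeds the count fluctuation $\Pi(n)-n$ into the limit and produces, at $\theta=1$, the covariance $c_{\mu\mu}(\tau,t)=\tau^2$ that is not the continuation of the $\theta<1$ formula. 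Making this contribution precise, rather than merely transferring an already-established limit, is where I expect the heaviest work to lie.
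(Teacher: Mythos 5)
Your overall architecture --- poissonise, prove finite-dimensional convergence by Lindeberg, compute covariances via Karlin's Tauberian asymptotics, establish tightness, then de-poissonise --- is the same as the paper's, and your per-urn covariance computations for $\theta\in(0,1)$ are correct. But two of your steps are genuinely broken, and they are precisely the steps where the paper's real work lies.

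\textbf{Tightness.} The proposed bound $\E[\Delta_1^2\Delta_2^2]\le C(t_3-t_1)^2$ is false for every $\theta\in(0,1)$. From $c_{\upsilon\upsilon}$ one computes, for fixed $t>0$, $\E(\upsilon(t+h)-\upsilon(t))^2=\Gamma(1-\theta)2^{\theta-1}h^{\theta}(1+o(1))$ as $h\to0$; hence for equally spaced $t_1<t_2<t_3$ with gap $h$, the Isserlis formula for the Gaussian limit gives $\E[\Delta_1^2\Delta_2^2]\ge\E\Delta_1^2\,\E\Delta_2^2\asymp h^{2\theta}\gg h^2$. Since $(x,y)\mapsto x^2y^2$ is continuous and nonnegative, a uniform prelimit bound would pass to the limit by fidi convergence and Fatou, a contradiction. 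Your own urn expansion, carried to the end, shows the same thing: writing $\Delta_1=\sum_i a_i/\sqrt{\beta(n)}$, $\Delta_2=\sum_i b_i/\sqrt{\beta(n)}$ with centred per-urn summands, the surviving terms are $\E\Delta_1^2\E\Delta_2^2$, $2\bigl(\sum_i\E[a_ib_i]\bigr)^2/\beta(n)^2$ and a diagonal term, and the first two are of order $(t_3-t_1)^{2\theta}$, not $(t_3-t_1)^2$. Billingsley's Theorem 13.5 needs an exponent strictly greater than $1$, so fourth moments can only work when $2\theta>1$; for $\theta\le 1/2$ the method fails outright. The missing idea --- and the paper's key device --- is a $\theta$-dependent moment order: with $k=[16/\theta]+1$, Rosenthal's inequality gives $\E|U^{**}_n(t_2)-U^{**}_n(t_1)|^k\le C(t_2-t_1)^4$, because the dominant term is $\bigl(\sum_i\E(u_i-\ou_i)^2/\beta(n)\bigr)^{k/2}\le\bigl(C(t_2-t_1)^{\theta/2}\bigr)^{k/2}$ and $\theta k/4\ge 4$; the adjacent-increment condition then follows by Cauchy--Schwarz, with a separate case analysis for $t_2-t_1<1/n$ (where increments of the discrete-time versions vanish or are dominated by $\Pi(1)$), on which your plan is silent.

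\textbf{De-poissonisation of $M$.} Your claim that the prefactor discrepancy ($s$ versus $\Pi(s)$) is ``genuinely not negligible'' and is the source of the $\theta=1$ value $c_{\mu\mu}(\tau,t)=\tau^2$ is backwards; planning the heaviest work there would lead nowhere. The paper proves $\sup_{0\le t\le1}|M^*_n(t)-\widetilde M_n(t)|\to0$ in probability, where $\widetilde M(s)=\Pi(s)\sum_ip_i\one_{\Pi_i(s)=0}$, and the reason is quantitative: $\widetilde M(s)-M(s)=(\Pi(s)-s)\,M(s)/s$ with $\Pi(s)-s=O_{\P}(\sqrt s)$ and $M(s)/s=O_{\P}(\alpha(s)/s)$, so the difference is $O_{\P}(\alpha(s)/\sqrt s)=o_{\P}(\sqrt{\alpha(s)})$ since $\alpha(s)/s\to0$ (true also at $\theta=1$, where $L(s)\to0$); the means differ by $2\E R_{\Pi(s),2}/s\to0$. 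Thus the Poisson-clock fluctuation contributes nothing to the limit for any $\theta\in(0,1]$, and every covariance in the theorem, including at $\theta=1$, is that of the poissonised process. The discrepancy you noticed is real but has a different explanation: the poissonised computation at $\theta=1$ (using $\E R_{\Pi(s),2}\sim\alpha(s)/2$) yields $c_{\mu\mu}(\tau,t)=\tau^2/(t+\tau)$, which \emph{is} the $\theta\to1$ continuation of the $\theta<1$ formula, agrees with $\var M(nt)\sim\alpha(nt)/2$, and is the only value consistent with the claimed self-similarity $H=1/2$; the stated $\tau^2$ is best read as a slip in the statement, not as an effect your coupling term could produce. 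What the $\theta=1$ case genuinely does require is the normalisation mismatch: the cross-covariances $c_{\rho\mu}$, $c_{\upsilon\mu}$ vanish because $M$ is normalised by $\sqrt{\alpha(n)}$ while $R,U$ are normalised by $\sqrt{nL^*(n)}$ and $L(n)/L^*(n)\to0$ --- again a fact about the poissonised model, not about de-poissonisation.
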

Thus, when $\theta=1$, $\rho(t)$ and $\upsilon(t)$ are Wiener
processes. For a general $\theta\in(0,1]$, the process
$(\rho(t),\upsilon(t),\mu(t))$ is self-similar with the Hurst
parameter $H=\theta/2$ which includes, in particular, a fractional
Brownian motion, a bi-fractional Brownian motion with parameter
$H = 1/2, K = \theta$ (see, e.g.\ \cite{Durieu}) with a new
self-similar process $\mu(t)$.

\section{Proof of Theorem~\ref{Th1}}

We start with formulating a couple of lemmas proved
in~\cite{Chebunin2}. We will generally use the letter $C$ and its
variants to denote a constant whose value is of no importance for us
and note in parentheses the parameters it depends upon. This should
not lead to a confusion when the same notation is used for, actually,
different constants in different contexts, the same way $O(1)$
notation is used.
\begin{Lem}\label{Lem2.1}
When $\theta >0$, there exist $n_0\ge 1$ and $C(\theta)<\infty$ such that
\begin{displaymath}
\frac{\E R(n\delta)}{\beta(n)} 
\leq C(\theta) \delta^{\theta/2}
\end{displaymath}
holds for any $\delta\in[0,1]$ and $n\ge n_0$.
\end{Lem}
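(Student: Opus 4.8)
The plan is to compute $\E R(s)$ explicitly via the Poissonisation and then reduce the claimed inequality to a statement about regularly varying functions. Since under the independent marking the $\{\Pi_i(s)\}$ are independent Poisson processes with intensities $p_i$, the event $\{\Pi_i(s)\ge 1\}$ has probability $1-e^{-p_is}$, so
\begin{displaymath}
\E R(s)=\sum_{i=1}^\infty\bigl(1-e^{-p_is}\bigr)=:\Phi(s).
\end{displaymath}
I would record two elementary facts about $\Phi$: it is nondecreasing in $s$, and $\Phi(s)\le\sum_i p_is=s$ because $1-e^{-x}\le x$ and $\sum_i p_i=1$.

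Next I would invoke Karlin's theory, available under \eqref{al}: the sum $\Phi$ is regularly varying of index $\theta$, with $\Phi(s)\sim\Gamma(1-\theta)\,\alpha(s)$ when $\theta\in(0,1)$ and $\Phi(s)\sim sL^*(s)$ when $\theta=1$. In every case $\Phi(s)/\beta(s)$ converges to a finite positive constant, so there are $n_0\ge1$ and $C_1(\theta)<\infty$ with $\Phi(n)/\beta(n)\le C_1(\theta)$ for $n\ge n_0$. This turns the target into a bound for the ratio $\Phi(n\delta)/\Phi(n)$, uniform over $\delta\in[0,1]$.

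The core of the argument is a case split on the size of $n\delta$, for a fixed threshold $a$. When $n\delta\ge a$ both arguments are large, and I would apply Potter's bound to the regularly varying $\Phi$: choosing the slack exponent $\epsilon=\theta/2$ and using $n\delta\le n$, one gets $\Phi(n\delta)/\Phi(n)\le(1+\theta/2)\,\delta^{\theta-\theta/2}=(1+\theta/2)\,\delta^{\theta/2}$. When $n\delta<a$ I would instead use the crude linear bound $\Phi(n\delta)\le n\delta$ together with the fact that $\beta$ has index $\theta>\theta/2$, so that $n^{\theta/2}/\beta(n)$ stays bounded; writing $n\delta=n\,\delta^{\theta/2}\cdot\delta^{1-\theta/2}$ and estimating $\delta^{1-\theta/2}\le(a/n)^{1-\theta/2}$ yields $\Phi(n\delta)/\beta(n)\le C_2(\theta)\,\delta^{\theta/2}$. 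Combining the two regimes and taking $C(\theta)$ to be the larger of the resulting constants gives the lemma.

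The main obstacle is uniformity in $\delta$: the slowly varying factor $L$ (or $L^*$) prevents a clean power law $\delta^\theta$, and near $\delta=0$ the Karlin asymptotics for $\Phi(n\delta)$ are simply unavailable since the argument need not be large. The choice of exponent $\theta/2$ rather than $\theta$ is precisely the room needed for Potter's bound to absorb the slowly varying part in the large regime, while the linear bound $\Phi(s)\le s$ controls the small regime; the only delicate point is reconciling these two estimates cleanly at the threshold $a$.
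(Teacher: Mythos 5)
Your proof is correct and follows essentially the same route as the paper's: Lemma~\ref{Lem2.1} is quoted there from \cite{Chebunin2}, but the technique you use --- Karlin's asymptotics $\E R(s)\sim\Gamma(1-\theta)\alpha(s)$ (resp.\ $sL^*(s)$ for $\theta=1$), a Potter/Karamata bound with slack exponent $\theta/2$ in the regime where $n\delta$ is large, and the crude bound $\E R(n\delta)\le n\delta$ combined with $n^{\theta/2}\le\beta(n)$ for $n\ge n_0$ in the regime where $n\delta$ is small --- is exactly the two-regime argument the authors themselves carry out in their proof of Lemma~\ref{Lem2.3}. The only point worth stating explicitly when writing it up is that Potter's bound requires \emph{both} arguments $n\delta$ and $n$ to exceed the Potter threshold, which your case split ensures since $n\ge n\delta\ge a$.
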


\begin{Lem}\label{Lem2.2}
For any $\epsilon, \delta\in(0,1)$ there exists an
$N=N(\epsilon,\delta)$ such that for any $n\ge N$,
\begin{displaymath}
  \P(\forall t\in[0,1] \  \ \exists \tau: |\tau-t|\le \delta,  
\ \Pi(n\tau)= [nt]) \ge1-\epsilon.
\end{displaymath}
\end{Lem}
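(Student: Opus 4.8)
The plan is to reduce the statement to a uniform law of large numbers for the jump times of $\Pi$. Write $S_k=\xi_1+\dots+\xi_k$ (with $S_0=0$) for the $k$th jump time, where $\xi_1,\xi_2,\dots$ are \iid\ $\Exp(1)$ interarrival times. Since $\Pi$ is a counting process, $\Pi(s)=k$ exactly for $s\in[S_k,S_{k+1})$, and in particular $\Pi(S_k)=k$ almost surely (there are no ties). Hence, to realise the value $\Pi(n\tau)=[nt]$ it suffices to take $\tau=S_{[nt]}/n$, and the event in the lemma holds on all of $[0,1]$ as soon as $|S_{[nt]}/n-t|\le\delta$ for every $t\in[0,1]$. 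The choice $\tau=S_0/n=0$ automatically covers the boundary case $[nt]=0$, for which $\Pi(0)=0$ and $|\tau-t|=t\le 1/n$.

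Next I would bound this deviation uniformly. Using $|[nt]/n-t|\le 1/n$, we have
\[
\sup_{t\in[0,1]}\left|\frac{S_{[nt]}}{n}-t\right|\le\frac1n\max_{0\le k\le n}|S_k-k|+\frac1n.
\]
Since $S_k-k=\sum_{j=1}^k(\xi_j-1)$ is a mean-zero martingale with $\var(\xi_j)=1$, Kolmogorov's (Doob's $L^2$) maximal inequality gives
\[
\P\Bigl(\max_{0\le k\le n}|S_k-k|\ge n\delta/2\Bigr)\le\frac{\E\,(S_n-n)^2}{(n\delta/2)^2}=\frac{4}{n\delta^2}.
\]
Combining the two displays, for $n$ large enough that $1/n\le\delta/2$, on the complement of the above event one has $\sup_t|S_{[nt]}/n-t|\le\delta$, so the claimed property holds for all $t$ simultaneously. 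It then suffices to pick $N=N(\epsilon,\delta)$ with $4/(N\delta^2)\le\epsilon$ and $1/N\le\delta/2$.

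I do not expect a serious obstacle here: the argument is a functional law of large numbers packaged through the renewal representation of $\Pi$, and the probabilistic content is entirely carried by the martingale maximal inequality, whose $O(1/n)$ decay is amply sufficient. The only points requiring care are the discretisation error $[nt]$ versus $nt$ and the behaviour near $t=0$, both of which are handled by the elementary $1/n$ corrections above.
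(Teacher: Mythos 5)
Your proof is correct. It is worth noting that the paper does not actually prove Lemma~\ref{Lem2.2}: it is quoted from \cite{Chebunin2}, and the only indication of the argument (repeated in Step~4) is that the claim follows from the monotonicity of $\Pi$ together with the Strong Law of Large Numbers --- i.e.\ pointwise a.s.\ convergence $\Pi(ns)/n\to s$ is upgraded by monotonicity to uniform a.s.\ convergence on $[0,1]$ (a P\'olya-type argument), and since the counting process passes through every integer level, a suitable $\tau$ exists for each $t$ simultaneously. Your argument replaces this soft route by the renewal representation: you invert the process, exhibit the explicit witness $\tau=S_{[nt]}/n$, and control $\max_{0\le k\le n}|S_k-k|$ by Kolmogorov's (Doob's $L^2$) maximal inequality. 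Both approaches rest on the same underlying fact --- the time-scaled Poisson process is uniformly close to the identity --- but yours is self-contained and quantitative: it produces an explicit threshold, any $N\ge\max\{4/(\epsilon\delta^2),\,2/\delta\}$, and an $O(1/(n\delta^2))$ bound on the failure probability, whereas the SLLN route is shorter if one takes the uniform law for granted but yields no rate. The delicate points --- that $\Pi(S_k)=k$ uses the right-continuous version of the counting process, the discretisation error between $[nt]/n$ and $t$, and the boundary case $t<1/n$ handled by $\tau=0$ --- are all treated correctly in your write-up.
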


In preparation of the proof, let us introduce some further
notation and establish a few inequalities we will be using.

In view of~\eqref{eq:UM}, let
\begin{align}
U_n^{*}(t) & =\frac{U (nt)-\E U (nt)}{(\beta(n))^{1/2}}, &
U_n^{**}(t)=\frac{U ([nt])-\E U ([nt])}{(\beta(n))^{1/2}}\\
M_n^{*}(t) & =\frac{M (nt)-\E M (nt)}{(\alpha(n))^{1/2}},  
& M_n^{**}(t)=\frac{M ([nt])-\E M ([nt])}{(\alpha(n))^{1/2}}.
\end{align}

For any two positive $\tau_1\le \tau_2$, define
\begin{align*}
U(\tau_2)-U(\tau_1)& =\sum_{i=1}^{\infty}\one \{\Pi_i(\tau_2)\ \text{is
  odd}\}-\one \{\Pi_i(\tau_1)\ \text{is odd}\}\\ 
& =\sum_{i=1}^{\infty}\one \{\Pi_i(\tau_2) \ \text{is odd}, \Pi_i(\tau_1) \ \text{is even}\}\\
& - \one \{\Pi_i(\tau_2) \ \text{is even}, \Pi_i(\tau_1) \ \text{is odd}\}\\
& \bydef \sum_{i=1}^{\infty}  u_{i}(\tau_1,\tau_2)
= \sum_{i=1}^{\infty}  u_{i}
  =\sum_{i=1}^{\infty} u'_{i}-u''_i,
\end{align*}
their expectations are denoted by
\begin{displaymath}
\ou_i =\ou_i'-\ou_i''=\ou_i(\tau_1,\tau_2)\bydef \E u'_i -\E u''_i.
\end{displaymath}
Similarly for $M$,
\begin{align*}
  M(\tau_2)-M(\tau_1)&=\sum_{i=1}^{\infty}(\tau_2-\tau_1)p_i\one
                       \{\Pi_i(\tau_2)=0\}- \tau_1 p_i\one
                       \{\Pi_i(\tau_1)=0,\Pi_i(\tau_2)>0\}\\ 
 & \bydef \sum_{i=1}^{\infty}
   m_{i}(\tau_1,\tau_2)=\sum_{i=1}^{\infty} m_{i}=\sum_{i=1}^{\infty}
   m'_{i}-m''_i,\\ 
 \om_i & =\om_i'-\om_i''=\om_i(\tau_1,\tau_2)\bydef \E m'_i -\E m''_i.
\end{align*}
Clearly, for all natural $k$,
\begin{align}
\E |u_i-\ou_i|^k &= |1+\ou_i|^k \ou_i''+|\ou_i|^k
                       (1-\ou_i'-\ou_i'')+|1-\ou_i|^k \ou_i' \notag \\
 & \le 2^k (\ou_i'+\ou_i'')+|\ou_i|^k\le (2^k+1) (\ou_i'+\ou_i'') \notag \\
& =(2^k+1)\Big[\sum_{j=0}^\infty \P \{\Pi_i(\tau_1)=2j, \
  \Pi_i(\tau_2)-\Pi_i(\tau_1)\ \text{is odd}\} \notag \\ 
& +\sum_{j=0}^\infty \P \{\Pi_i(\tau_1)=2j+1, \
  \Pi_i(\tau_2)-\Pi_i(\tau_1)\ \text{is odd}\}\Big] \notag \\
& =(2^k+1)\P \{\Pi_i(\tau_2-\tau_1)\ \text{is odd}\} \notag \\
 & <(2^k+1) \P
  \{\Pi_i(\tau_2-\tau_1)>0\}.  \label{u}
\end{align}
Similarly,
\begin{align*}
\E |m'_i-\om'_i|^k & \le 2^{k-1}(\E |m'_i|^k+|\om'_i|^k)=2^{k-1}(\tau_2-\tau_1)^kp^k_i(e^{-\tau_2 p_i}+e^{-k\tau_2 p_i})\\
& <2^k k! (1-e^{-(\tau_2-\tau_1)p_i})=  2^k k!\, \P \{\Pi_i(\tau_2-\tau_1)>0\},\\
\E |m''_i-\om''_i|^k & \le 2^{k-1}(\E |m''_i|^k+|\om''_i|^k)<2^{k}\tau_1^kp^k_i e^{-\tau_1 p_i}(1-e^{-(\tau_2-\tau_1)p_i})\\
& <2^k k! (1-e^{-(\tau_2-\tau_1)p_i})=  2^k k! \,\P \{\Pi_i(\tau_2-\tau_1)>0\}.
\end{align*}
As a result,
\begin{equation}\label{m}
\E |m_i-\om_i|^k<4^{k}k! \,\P \{\Pi_i(\tau_2-\tau_1)>0\}.
\end{equation}

We are using the same notation $u_i, \ m_i$ and $\ou_i, \ \om_i$
without explicitly specifying the corresponding values of
$\tau_1<\tau_2$, this should not create a confusion. The following
lemma will be used in the proof of the relative compactness of the
process $M^*_n(t)$.

\begin{Lem}\label{Lem2.3}
Let $\theta\in(0,1]$ and  $\delta\in[0,1]$. Then there exist $n_0\ge
1$ and $C(\theta)<\infty$ such that
\begin{displaymath}
\frac{\var (M(nt_2)-M(nt_1))}{\alpha(n)} \le C(\theta)\delta^{\theta/2}
\end{displaymath}
for all $t_2-t_1=\delta\ge 0$ and $n\ge n_0$.
\end{Lem}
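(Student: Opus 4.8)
The plan is to use the independence of the Poisson processes $\Pi_i$ to turn the variance of the increment into a sum of one-dimensional variances, and then to estimate that sum by regular variation. Put $\tau_1=nt_1$, $\tau_2=nt_2$, so that $\tau_2-\tau_1=n\delta$. Since the $m_i=m_i(\tau_1,\tau_2)$ are independent,
\begin{displaymath}
\var\bigl(M(nt_2)-M(nt_1)\bigr)=\sum_{i=1}^\infty \var(m_i).
\end{displaymath}
Writing $m_i=m_i'-m_i''$ and using $(m_i-\om_i)^2\le 2(m_i'-\om_i')^2+2(m_i''-\om_i'')^2$, the $k=2$ cases of the intermediate second-moment bounds displayed just before \eqref{m} control each $\var(m_i)$ termwise, and after summation yield
\begin{displaymath}
\var\bigl(M(nt_2)-M(nt_1)\bigr)\le C\,(S_1+S_2),\quad
S_1=(n\delta)^2\sum_i p_i^2 e^{-\tau_2 p_i},\quad
S_2=\tau_1^2\sum_i p_i^2 e^{-\tau_1 p_i}\bigl(1-e^{-n\delta p_i}\bigr).
\end{displaymath}

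It is worth separating off the easy range first. For $\theta\in(0,1)$ one has $\beta(n)=\alpha(n)$, and the cruder estimate $\var(m_i)<32\,\P\{\Pi_i(n\delta)>0\}$ (the case $k=2$ of \eqref{m}) already suffices: summing gives $\var(M(nt_2)-M(nt_1))<32\,\E R(n\delta)$, and Lemma~\ref{Lem2.1} converts this into the desired $\var(M(nt_2)-M(nt_1))/\alpha(n)<32\,C(\theta)\delta^{\theta/2}$. This shortcut fails exactly at $\theta=1$, where $\beta(n)=nL^*(n)$ with $L^*$ in general not comparable to $L$, while the statement insists on the normalisation $\alpha(n)=nL(n)$. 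The purpose of keeping the weight $p_i^2$ in $S_1,S_2$ is precisely that the corresponding sums are governed by $L$ itself (the underlying integrals converge at the origin), whereas the crude bound through $\E R$ produces $L^*$ when $\theta=1$.

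To finish for all $\theta\in(0,1]$ in a unified way I would estimate $S_1,S_2$ by Karlin's regular-variation asymptotics, which under \eqref{al} give, for every fixed $\gamma>\theta$ and all large $s$,
\begin{displaymath}
\sum_i p_i^\gamma e^{-sp_i}\le C(\gamma,\theta)\,s^{\theta-\gamma}L(s).
\end{displaymath}
For $S_1$ take $\gamma=2$; since $t_2=t_1+\delta\ge\delta$ and, by Potter's bounds, $L(nt_2)\le C_\epsilon t_2^{-\epsilon}L(n)$ for $t_2\le1$, one gets
\begin{displaymath}
S_1\le C(n\delta)^2(nt_2)^{\theta-2}L(nt_2)
= C\,\delta^2 t_2^{\theta-2}\,n^\theta L(nt_2)
\le C_\epsilon\,\delta^2 t_2^{\theta-2-\epsilon}\alpha(n)
\le C_\epsilon\,\delta^{\theta-\epsilon}\alpha(n).
\end{displaymath}
For $S_2$ I would first use the elementary inequality $1-e^{-x}\le x^{\theta/2}$ (valid for all $x\ge0$ because $\theta/2\le1$) to write $1-e^{-n\delta p_i}\le(n\delta)^{\theta/2}p_i^{\theta/2}$, reducing $S_2$ to $(n\delta)^{\theta/2}\tau_1^2\sum_i p_i^{2+\theta/2}e^{-\tau_1 p_i}$; applying the displayed asymptotics with $\gamma=2+\theta/2>\theta$ and $t_1\le1$ then gives $S_2\le C_\epsilon\,\delta^{\theta/2}t_1^{\theta/2-\epsilon}\alpha(n)\le C_\epsilon\,\delta^{\theta/2}\alpha(n)$. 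Choosing $\epsilon\le\theta/2$ in the bound for $S_1$ yields $S_1+S_2\le C(\theta)\delta^{\theta/2}\alpha(n)$, which is the claim.

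The main obstacle is uniformity over the whole admissible range of $(\delta,n)$, since the regular-variation asymptotics are statements about large argument. They apply directly only when $\tau_1,\tau_2$ and $n\delta$ are large; the delicate regime is that of small increments, $n\delta=O(1)$ (together with small $t_2$), where the exponential factors provide no decay and the sums must instead be bounded crudely, e.g.\ by $\sum_i p_i^2\le\sum_i p_i=1$, the required smallness then coming from the prefactors $(n\delta)^2$ and $\tau_1^2$. Organising the argument by these two regimes, and replacing the slowly varying factors $L(nt)$ by $L(n)$ uniformly through Potter's bounds, is where the genuine care lies; the endpoint $\theta=1$ needs attention only to confirm that the surviving slowly varying function is $L$ and not $L^*$.
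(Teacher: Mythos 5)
Your proposal is correct and follows the same overall strategy as the paper's proof: decompose the variance termwise by independence, pass to second moments to arrive at the two sums $S_1$ and $S_2$ (the paper's first display in its proof of Lemma~\ref{Lem2.3} is exactly of this form), and then control these sums by regular variation, with a split between a large-argument regime and a crude small-argument regime. The differences are in execution and are instructive. The paper recognizes the sums as Karlin occupancy expectations, bounding the variance by a combination of $\E R_{\Pi(\tau_2),2}$, $\E R^*_{\Pi(n\delta),2}$ and $\E R_{\Pi(\tau_2),3}$, and then invokes Karlin's limit $\E R^*_{\Pi(x),2}/\alpha(x)\to\Gamma(2-\theta)$, which is normalized by $\alpha$ for \emph{all} $\theta\in(0,1]$ --- the same structural point you make when explaining why the shortcut through \eqref{m} and Lemma~\ref{Lem2.1} works for $\theta\in(0,1)$ but breaks at $\theta=1$ (that observation is correct and is a useful clarification not present in the paper). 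You instead bound the sums directly by the Abelian estimate $\sum_i p_i^\gamma e^{-sp_i}\le C s^{\theta-\gamma}L(s)$ for $\gamma>\theta$, and your inequality $1-e^{-x}\le x^{\theta/2}$, which turns $S_2$ into a sum with exponent $\gamma=2+\theta/2$ at the single time argument $\tau_1$, neatly replaces the paper's further splitting of $S_2$ into an $\E R^*_{\Pi(n\delta),2}$ piece and an $\E R_{\Pi(\tau_2),3}$ piece. Your choice of Potter exponent $\epsilon\le\theta/2$ is also the right one (the paper's use of the exponent $-1/2$ at that spot is only adequate for $\theta=1$).

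One step in your small-argument regime needs to be made precise: the smallness there does \emph{not} come from the prefactors alone, since in that regime they are merely bounded by constants. For instance, when $n\delta\le x_0$ the crude bound gives $S_1\le(n\delta)^2\le x_0^{2-\theta/2}\,n^{\theta/2}\delta^{\theta/2}$, and to conclude $S_1/\alpha(n)\le C\delta^{\theta/2}$ you must still absorb the residual factor $n^{\theta/2}$ into $\alpha(n)=n^\theta L(n)$. This holds only for $n$ large enough that $n^\theta L(n)\ge n^{\theta/2}$, and this requirement is precisely where the $n_0$ in the statement of the lemma comes from: the paper defines $n_0$ by this very condition. The same absorption is needed for the crude bound on $S_2$ (via $1-e^{-n\delta p_i}\le\min\{1,n\delta p_i\}$ and $\tau_1\le x_0$). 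With that addition your two-regime argument closes, for all $\theta\in(0,1]$, with the normalization $\alpha(n)$ as required.
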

\begin{proof}
Put $\tau_2=n t_2$ and $\tau_1=n t_1$. Since the variance of an
indicator does not exceed its expectation, we have that
\begin{multline*}
\var (M(\tau_2)-M(\tau_1))=\sum_{i=1}^{\infty}\E (m_i-\om_i)^2
= \sum_{i=1}^{\infty} \E (m'_i)^2-(\om'_i-\om_i'')^2+\E(m''_i)^2 \\
\le  \sum_{i=1}^{\infty} (\tau_2-\tau_1)^2p_i^2 e^{-\tau_2 p_i}
+\tau_1^2 p_i^2 e^{-\tau_1 p_i}(1-e^{-(\tau_2-\tau_1)p_i}\pm (\tau_2-\tau_1)p_i e^{-(\tau_2-\tau_1)p_i})\\
\le 2\frac{(\tau_2-\tau_1)^2}{\tau_2^2}\E R_{\Pi(\tau_2),2}+\E R^*_{\Pi(\tau_2-\tau_1),2}
+6\frac{\tau_1^2(\tau_2-\tau_1)}{\tau_2^3}\E R_{\Pi(\tau_2),3}.
\end{multline*}

By \cite[Th.~2.1\ and\ (23)]{Karlin},
\begin{displaymath}
 \lim\limits_{x\to\infty}\frac{\E R^*_{\Pi(x),2}}{\alpha(x)}=\Gamma(2-\theta)<2,
\end{displaymath}
therefore there exists an $x_1>1$ such that for all $x\ge x_1$, 
\begin{displaymath}
\E R_{\Pi(x),2}+\E R_{\Pi(x),3}<\E R^*_{\Pi(x),2}< 2 \alpha(x).
\end{displaymath}

According to Karamata (see, e.g.\ \cite[Th.~2,1,\
Eq.~A6.2.10]{BorovkovTV}), there exists an $x_2>0$ such that for all
$x$ and $\delta\in(0,1]$ satisfying $x\delta\ge x_2$, one has
\begin{displaymath}
  \frac{L(x\delta)}{L(x)}\le 2 \delta^{-1/2}.
\end{displaymath}
Let $n\delta>\max\{x_1,x_2\}=x_0$,  then
\begin{displaymath}
  \frac{\E R^*_{\Pi(n \delta),2}}{\alpha(n)} \le 
2\frac{(n \delta)^\theta L(n \delta)}{n^\theta L(n)}
\le
4 \delta^{\theta/2}, \ \ \ 
\frac{\max(\E R_{\Pi(n t_2),2},\E R_{\Pi(n t_2),3})}{\alpha(n)} 
\le 4 t_2^{\theta/2}.
\end{displaymath}
Choose $n_0$ such that for all $n\ge n_0$ we have $n^\theta L(n)\ge
n^{\theta/2}$. Then, provided $n t_2 \le x_0$,
\begin{align*}
& \frac{\E R^*_{\Pi(n\delta),2}}{\alpha(n)} 
\leq
\frac{\E {\Pi(n\delta)}}{\alpha(n)} 
\le \frac{n\delta}{n^{\theta/2}}=(n\delta)^{1-\theta/2}\delta^{\theta/2}\le x_0 \delta^{\theta/2}, \\
& \frac{\max(\E R_{\Pi(n t_2),2},\E R_{\Pi(n t_2),3})}{\alpha(n)} 
\le x_0 t_2^{\theta/2}.
\end{align*}
Now take $c=\max\{4, x_0 \}$. Since $t_2-t_1=\delta\ge 0$, then
for all $n\ge n_0$ we obtain
\begin{displaymath}
\frac{\var (M(nt_2)-M(nt_1))}{\alpha(n)}\le
2 c \frac{\delta^2}{t_2^{2-\theta/2}}+\delta^{\theta/2}+6 c \frac{t_1^2 \delta}{t_2^{3-\theta/2}}\le 9c \cdot \delta^{\theta/2}.
\end{displaymath}
 \end{proof}

We are ready to prove Theorem~\ref{Th1}. The proof is broken into four steps.

\paragraph{Step 1: Covariance.}
The first rather technical step consists in establishing a formulae for the
covariances which is put in Appendix.

\paragraph{Step 2: Convergence of finite-dimensional distributions.}
Along the lines of the proof of  \cite[Th.~12]{Dutko}, one can show
that for
\begin{displaymath}
  m\geq 1, \ \ \  0<t_1<t_2<\ldots<t_m\leq 1
\end{displaymath}
the triangular array of $m$-dimensional vectors (i.e.\ independent in $k$ for
every $n$)
\begin{displaymath}
\left\{\frac{\one (\Pi_k(nt_j)\ \text{is odd})-\P(\Pi_k(nt_j)\ \text{is odd})}{\sqrt{\beta(n)}}, \ j \leq m, \
k\leq n \right\}_{n\geq 1}
\end{displaymath}
satisfies the Lindeberg condition
(\seg~\cite[Th.~6.2]{BorovkovTV}). Similarly, the convergence of the
finite-dimensional distributions is shown for the process $M^*_n(t)$.

\paragraph{Step 3: Relative compactness.}

We shall follow the following plan:

\begin{itemize}
\item [({\bf a})] prove the continuity of the limiting process;
\item [({\bf b})] prove that $U_n^*$ and $U_n^{**}$ 
  ($M_n^*$ and $M_n^{**}$) are sufficiently close;
\item [({\bf c})] prove the relative compactness of $U_n^{**}$ ($M_n^{**}$).
\end{itemize}
\begin{itemize}

\item[{\bf a(U)}] 
 Take  $\tau_1=n t_1, \ \tau_2=n t_2$ for $0<t_1<t_2<0$. Then
\begin{multline*}
\E(U^*_n(t_2)-U^*_n(t_1))^2= \E \Big(  \sum\limits_{i=1}^{\infty}
(u_i-\ou_i)  \Big)^2/\beta(n) 
= \sum\limits_{i=1}^{\infty} \E (u_i-\ou_i)^2/\beta(n)\\
\le 5 \sum\limits_{i=1}^{\infty} \P (\Pi_i(\tau_2-\tau_1)>0) /\beta(n)= 5 \E R_{\Pi(\tau_2-\tau_1)}/\beta(n)
 \le 5 C(\theta) (t_2-t_1)^{\theta/2}.
\end{multline*}
We have used above the independence of the summands, inequality
(\ref{u}) and Lemma~\ref{Lem2.1}. 

Since the covariance function has a limit, \cite[Th.~1.4]{Adler} will
imply that the limiting Gaussian process a.s.\ has a continuous
modification on  $[0,1]$.

Since the trajectories of the limiting Gaussian process belong a.s.\ 
to the class $C(0,1)$, then the weak convergence in the Skorohod
topology implies the weak convergence in the uniform metric,
\seg~\cite{Billingsley}. Therefore, it is sufficient to prove the
relative compactness of $\{U^{*}_n\}_{n\ge n_0}$ (with $n_0$ as in
Lemma~\ref{Lem2.1}) in the Skorohod topology.

\item[{\bf b(U)}]
Since with probability one we have
\begin{displaymath}
  |U(nt)-U([nt])|\le \Pi(nt)-\Pi([nt])\le\Pi([nt]+1)-\Pi([nt]),
\end{displaymath}
then
\begin{displaymath}
  \E|U(nt)-U([nt])|\le1.
\end{displaymath}
Hence, for all $\eta>0$,
\begin{multline*}
  \P (\sup\limits_{0\le t\le 1} |U^*_n(t)-U^{**}_n(t)|>\eta)\\
  \le\P (\sup\limits_{0\le t \le 1} 
  (|U(nt)-U([nt])|+ \E|U(nt)-U([nt])|)>\eta\sqrt{\beta(n)})\\
  \le\P (\sup\limits_{0\le t \le 1} 
  (\Pi([nt]+1)-\Pi([nt])+ 1)>\eta\sqrt{\beta(n)})\\
  = \P (\sup\limits_{0\le m \le n} (\Pi(m+1)-\Pi(m)+1)>\eta\sqrt{\beta(n)})\\
  \le \sum\limits_{m=0}^n \P(\Pi(m+1)-\Pi(m)+1>\eta\sqrt{\beta(n)})\\
  \le \sum\limits_{m=0}^n \frac{\E e^{\Pi(m+1)-\Pi(m)+1}}{ e^{\eta\sqrt{\beta(n)}}}
  =(n+1)\frac{\E e^{\Pi(1)}}{e^{\eta\sqrt{\beta(n)}-1}}
  =(n+1)e^{e-\eta\sqrt{\beta(n)}}
  \to 0
\end{multline*}
when $ n\to\infty$. Therefore, it is sufficient to show the relative
compactness of $\{U^{**}_n\}_{n\ge n_0}$ (with $n_0$ as in
Lemma~\ref{Lem2.1}) in the Skorokhod topology.

\item[{\bf c(U)}]  For any $t_1, \ t_2 \in [0,1]$ satisfying $
  \frac{1}{2n}\le t_2-t_1$ we have that
\begin{displaymath}
[nt_2]-[nt_1]\le n(t_2-t_1)+1 \le n(t_2-t_1)+2n(t_2-t_1)= 3n(t_2-t_1)
\end{displaymath}
\begin{equation}\label{eq:star}
\le 3n(t_2-t_1)\cdot (2n(t_2-t_1))^3=24n^4(t_2-t_1)^4.
\end{equation}
Put $k=[16/\theta]+1$,  $\tau_1=[n t_1], \ \tau_2=[n t_2]$.
 
Recall the Rosenthal inequality~\cite{Rosenthal}: if $\phi_i$ are
independent random variables with $\E\phi_i=0$, then for all $k\geq2$
there exists a constant $c(k)$ such that
\begin{equation}
  \label{eq:rosen}
  \E\Big|\sum_i \phi_i\Big|^k\leq c(k)\max\bigg\{\sum_i\E|\phi_i|^k,
  \Big(\sum_i \E\phi_i^2\Big)^{k/2}\bigg\}.
\end{equation}

For all $n\ge n_0$ (with $n_0$ as in Lemma~\ref{Lem2.1}) we then have
\begin{multline*}
  \E|U^{**}_n(t_2)-U^{**}_n(t_1)|^k=\frac{\E\Big|\sum\limits_{i=1}^{\infty}
    (u_i-\ou_i)\Big|^k}{(\beta(n))^{k/2}} \\
  \leq \frac{c(k)}{(\beta(n))^{k/2}} \bigg( \sum\limits_{i=1}^{\infty}
  \E | u_i-\ou_i|^{k}+
  \Big( \sum\limits_{i=1}^{\infty} \E ( u_i-\ou_i)^2\Big)^{k/2}\bigg)\\
  \le \frac{C(k)}{(\beta(n))^{k/2}} \bigg(\sum\limits_{i=1}^{\infty}
  \P (\Pi_i(\tau_2-\tau_1)>0) + \Big(
  \sum\limits_{i=1}^{\infty} \P (\Pi_i(\tau_2-\tau_1)>0) \Big)^{k/2}\bigg)\\
  =\frac{C(k)}{(\beta(n))^{k/2}}
  \left(\E R(\tau_2-\tau_1) + \left(\E R(\tau_2-\tau_1) \right)^{k/2}\right)\\
  \le \frac{C(k)}{(\beta(n))^{k/2}}\left(24n^4(t_2-t_1)^4+ (\E
    R(3n(t_2-t_1)))^{k/2}\right) \le \widetilde{C}(\theta)(t_2-t_1)^4,
\end{multline*}
where $c(k)$, $C(k)$ and $\widetilde{C}(\theta)$ depend only on their arguments.

Above, we have used \eqref{eq:rosen} in the first inequality,
\eqref{u} in the second and finally, \eqref{eq:star} and Lemma~\ref{Lem2.1}
alongside with the bound
\begin{equation}
  \label{eq:2}
\E R(\tau_2-\tau_1) \leq \E (\Pi([nt_2])-\Pi([nt_1]))=[nt_2]-[nt_1].
\end{equation}

If $0\le t_2-t_1<\frac1n$, then $[nt_1]=[nt]$ or $[nt_2]=[nt]$ for all
$t\in[t_1, t_2]$, therefore
\begin{displaymath}
D\bydef  \E 
(|{U}^{**}_n(t)-{U}^{**}_n(t_1)|^{k/2}
|{U}^{**}_n(t_2)-{U}^{**}_n(t)|^{k/2})=0\le (t_2-t_1)^2.
\end{displaymath}
If $t_2-t_1\ge 1/n$, then there are the following three cases:
\begin{enumerate}
\item if $t_2-t\ge \frac1{2n}$, $t-t_1\ge \frac1{2n}$, then the
  Cauchy--Schwarz inequality implies
  \begin{displaymath}
    D\le \widetilde{C}(\theta) (t_2-t)^2\cdot (t-t_1)^2 \le
    \widetilde{C}(\theta)(t_2-t_1)^2.
  \end{displaymath}
\item If $t_2-t\ge \frac1{2n}$, $t-t_1< \frac1{2n}$, then since
  \begin{displaymath}
|U([nt])-U([nt_1])|\le_{\textrm{a.s.}} \Pi([nt])-\Pi([nt_1])\le_{st}\Pi(1),
\end{displaymath}
the same inequality yields
\begin{displaymath}
D\le \left( \widetilde{C}(\theta) (t_2-t)^4 \cdot\E
  \left(\frac{\Pi(1)+1}{\sqrt{\beta(n)}}\right)^{k}\right)^{1/2}\le
\widehat{C}(\theta)(t_2-t_1)^2.
\end{displaymath}
\item If $t_2-t< \frac1{2n}$, $t-t_1\ge \frac1{2n}$, then since
  \begin{displaymath}
  |U([nt_2])-U([nt])|\le_{\textrm{a.s.}} \Pi([nt_2])-\Pi([nt])\le_{st}\Pi(1),
\end{displaymath}
we have that
\begin{displaymath}
D\le \left( \E \left(\frac{\Pi(1)+1}{\sqrt{\beta(n)}}\right)^{k}\cdot
  \widetilde{C}(\theta) (t-t_1)^4\right)^{1/2}\le
\widehat{C}(\theta)(t_2-t_1)^2. 
\end{displaymath}
Now the relative compactness follows from,
e.g.,~\cite[Th.~13.5]{Billingsley}.
\end{enumerate}

\item[{\bf a(M)}] Because the covariance function has a limit, it is
  sufficient to appeal to Lemma~\ref{Lem2.3} and
  \cite[Th.~1.4]{Adler} to establish existence of an almost sure
  continuous on $[0,1]$ modification of the limiting Gaussian
  process. Since the trajectories of this process are a.s. in
  $C(0,1)$, then the weak convergence in the Skorohod topology implies
  the uniform convergence, see \cite{Billingsley}. Thus it is
  sufficient to prove a relative compactness of the family
  $\{M^{*}_n\}_{n\ge n_0}$ in the Skorohod topology (here $n_0$ is the
  same as in Lemma~\ref{Lem2.1}).

\item[{\bf b(M)}]
Set $\tau_2=nt$ and $\tau_1=[nt]$. Since $\tau_2-\tau_1\le 1$, then
\begin{displaymath}
\E |M(\tau_2)-M(\tau_1)|\le\sum_{i=1}^{\infty}(\tau_2-\tau_1)p_i
e^{-p_i \tau_2} + \tau_1 p_i e^{-p_i \tau_1}(1-e^{-p_i
  (\tau_2-\tau_1)}) 
\end{displaymath}
\begin{displaymath}
\le\sum_{i=1}^{\infty} p_i e^{-p_i \tau_2} +  e^{-1}p_i
(\tau_2-\tau_1)<\sum_{i=1}^{\infty} 2p_i=2. 
\end{displaymath}
Let $m'''_i = m''_i(\tau_1,\tau_1+1)$ and $\om'''_i=\E m'''_i$. Then
we have almost surely,
\begin{multline*}
 |M(\tau_2)-M(\tau_1)|\le\sum_{i=1}^{\infty} (m'_i+m''_i) \le
 \sum_{i=1}^{\infty} (p_i +m'''_i)\\
 =1+\sum_{i=1}^{\infty} (m'''_i + \om'''_i - \om'''_i)
<2+\left| \sum_{i=1}^{\infty}  (m'''_i - \om'''_i) \right|.
\end{multline*}
We know that for any integer $k\ge2$
\begin{displaymath}
\E | m'''_i - \E m'''_i |^k<2^k k! \P (\Pi_i(\tau_1+1-\tau_1)>0)=2^k
k!(1-e^{-p_i})< 2^k k! p_i. 
\end{displaymath}
Using the independence of the terms and Rosenthal inequality, for any
$k\geq 2$,
\begin{displaymath}
\E \left| \sum_{i=1}^{\infty}  (m'''_i - \om'''_i )\right|^k\le c(k)
\left(
\sum\limits_{i=1}^{\infty} \E | m'''_i - \om'''_i |^k+
\left( \sum\limits_{i=1}^{\infty} \E ( m'''_i - \om'''_i )^2\right)^{k/2}\right)
\end{displaymath}
\begin{displaymath}
<c(k)(2^kk! + 4^k)=C(k).
\end{displaymath}
Hence, for $k\ge [2/\theta]+1$ and all $\eta>0$
\begin{displaymath}
\P (\sup\limits_{0\le t\le 1} |M^*_n(t)-M^{**}_n(t)|>\eta)
\end{displaymath}
\begin{displaymath}
\le\P (\sup\limits_{0\le t \le 1} 
(|M(nt)-M([nt])|+ \E|M(nt)-M([nt])|)>\eta\sqrt{\alpha(n)})
\end{displaymath}
\begin{displaymath}
\le\P \left(\max\limits_{0\le [nt] \le n} 
\left(\left| \sum_{i=1}^{\infty}  m'''_i - \E m'''_i \right|+ 4\right)>\eta\sqrt{\alpha(n)}\right)
\end{displaymath}
\begin{displaymath}
\le \sum\limits_{[nt]=m\in\{0,1,...,n\}} \P \left( \left| \sum_{i=1}^{\infty}  m'''_i - \E m'''_i \right|+ 4>\eta\sqrt{\alpha(n)}\right)
\end{displaymath}
\begin{displaymath}
\le \sum\limits_{m=0}^n \frac{C(k)}{ (\eta\sqrt{\alpha(n)}-4)^k}
=\frac{C(k)(n+1)}{ (\eta\sqrt{\alpha(n)}-4)^k}
\to 0 \ \textrm{when}  \  n\to\infty. 
\end{displaymath}
Therefore, it is sufficient to show the local compactness of
$\{M^{**}_n\}_{n\ge n_0}$ in the Skorohod topology.

\item[{\bf c(M)}] Let $t_1, \ t_2 \in [0,1]$ and
  $ \frac{1}{2n}\le t_2-t_1$, then \eqref{eq:star} holds. Set
  $k=[16/\theta]+1$, $\tau_1=[n t_1], \ \tau_2=[n t_2]$.
 
Again, by independence and the Rosenthal inequality,
\begin{multline*}
  \E|M^{**}_n(t_2)-M^{**}_n(t_1)|^{k}=\frac{\E\left|
      \sum\limits_{i=1}^{\infty}(m_i-\om_i)\right|^k}{(\alpha(n))^{k/2}}
\\
\leq \frac{c(k)}{(\alpha(n))^{k/2}}
\left(
\sum\limits_{i=1}^{\infty} \E | m_i-\om_i|^{k}+
\left( \sum\limits_{i=1}^{\infty} \E ( m_i-\om_i)^2\right)^{k/2}\right)
\\
\le
 \frac{C(\beta)}{(\alpha(n))^{k/2}}
\left(\sum\limits_{i=1}^{\infty} \P (\Pi_i(\tau_2-\tau_1)>0) + \left(
\var  (M(\tau_2)-M(\tau_1)) \right)^{k/2}\right)
\\
=\frac{C(k)}{(\alpha(n))^{k/2}}
\left(\E R(\tau_2-\tau_1) + \left(\var  (M(\tau_2)-M(\tau_1)) \right)^{k/2}\right)
\\
\le
\frac{C(k)}{(\alpha(n))^{k/2}}\left(24n^4(t_2-t_1)^4+
(C(\theta) \alpha(n)(\tau_2-\tau_1)/n)^{k/2}\right)
\le
\widetilde{C}(\theta)(t_2-t_1)^4,
\end{multline*}
where $c(k)$, $C(k)$ and $\widetilde{C}(\theta)$ depend only on their
arguments.

Above, we have used inequalities (\ref{m}), (\ref{eq:star}) and
Lemmas~\ref{Lem2.3}, \ref{Lem2.1} alongside with the bound
\begin{displaymath}
\E R(\tau_2-\tau_1) \leq \E (\Pi([nt_2]-[nt_1]))=[nt_2]-[nt_1].
\end{displaymath}

When $0\le t_2-t_1<\frac1n$, then $[nt_1]=[nt]$ or $[nt_2]=[nt]$ for any $t\in[t_1, t_2]$.  
Thus
\begin{displaymath}
B\bydef  \E 
(|{M}^{**}_n(t)-{M}^{**}_n(t_1)|^{k/2}
|{M}^{**}_n(t_2)-{M}^{**}_n(t)|^{k/2})=0\le (t_2-t_1)^2.
\end{displaymath}

When $t_2-t_1\ge 1/n$, we have the following three cases:
\begin{enumerate}
\item if $t_2-t\ge \frac1{2n}$, $t-t_1\ge \frac1{2n}$, then the
  Cauchy--Schwarz inequality gives
  \begin{displaymath}
B\le \widetilde{C}(\theta) (t_2-t)^2\cdot (t-t_1)^2 \le \widetilde{C}(\theta)(t_2-t_1)^2;
\end{displaymath}
\item if $t_2-t\ge \frac1{2n}$, $t-t_1< \frac1{2n}$, then since for any $l\ge 2$,
\begin{displaymath}
 \E |M([nt])-M([nt_1])-\E(M([nt])-M([nt_1])|^l
\end{displaymath}
\begin{displaymath}
\le\E\left(4+\left| \sum_{i=1}^{\infty}  m''_i([nt_1]+1,[nt_1]) - \E m''_i([nt_1]+1,[nt_1]) \right|\right)^l< C(l),
\end{displaymath}
the Cauchy--Schwarz inequality yields the bound
\begin{displaymath}
  B\le \left( \widetilde{C}(\theta) (t_2-t)^4 \cdot
    \frac{C(k)}{\alpha(n)^{k/2}}\right)^{1/2}\le \widehat{C}(\theta)(t_2-t_1)^2;
\end{displaymath}
\item finally, $t_2-t< \frac1{2n}$, $t-t_1\ge \frac1{2n}$, is similar
  to the previous case.
\end{enumerate}
Thus the required compactness follows from~\cite[Th.~13.5]{Billingsley}.
\end{itemize}

Finally, for the next step we need to show that $M(s)$, when time scaled, is close to
its fully Poissonised version
\begin{displaymath}
  \widetilde{M}(s)\bydef M_{\Pi(s)}=\sum_{i=1}^\infty \Pi(s) p_i\one_{\Pi_i(s)=0}.
\end{displaymath}
Namely, we aim to show that
\begin{equation}\label{eq:mt}
  \sup\limits_{0\le t\le 1} |M^*_n(t)-\widetilde{M}_n(t)|\to 0\quad \text{in probability,}
\end{equation}
where
\begin{displaymath}
  \widetilde{M}_n(t)=\frac{\widetilde{M}(nt)-\E\widetilde{M}(nt)}{(\alpha(n))^{1/2}}.
\end{displaymath}
Introduce $\Pi'_i(s)=\Pi(s)-\Pi_i(s)$ and
$\widetilde{\Pi}(s)=(\Pi(s)-s)/\sqrt{s}$. Since
$\widetilde{M}(s)=\sum_{i=1}^\infty \Pi'_i(s) p_i \one_{\Pi_i(s)=0}$, then
\begin{multline*}
  |\E\widetilde{M}(s)-\E M(s)|=|\E\sum_{i=1}^\infty
  (\Pi'_i(s)-s) p_i \one_{\Pi_i(s)=0}| \\
  = \Big|\sum_{i=1}^\infty (s(1-p_i)-s) p_i e^{-s p_i}\Big|=\frac{2 \E
    R_{\Pi(s),2}}{s}\to 0
\end{multline*}
as $s\ti$ and it is bounded by 1. Thus there exists a sufficiently small
$\epsilon=\epsilon(\theta)>0$ such that for $\delta_n=n^{\epsilon-1}$
\begin{displaymath}
\sup\limits_{0\le t\le \delta_n} |M^*_n(t)-\widetilde{M}_n(t)|<
\frac{\Pi(n\delta_n)+n\delta_n+1}{(\alpha(n))^{1/2}} \to 0\ \text{a.s.}
\end{displaymath}
when $n\to\infty$.

By the Strong Law of Large Numbers for $M(s)$ and the well-known
asymptotic behaviour of $\E M(s)$ (see,
e.g.,~\cite[Eq.~(23)]{Karlin}), we conclude that for any
$\theta\in(0,1]$, $M(s)/(s \alpha(s))^{1/2}\to 0$ a.s. when
$s\to\infty$. Moreover, according to the Central Limit theorem
$\widetilde{\Pi}(s)$ is asymptotically standard normal for large $s$.

Finally, we have almost surely,
\begin{displaymath}
|M^*_n(t)-\widetilde{M}_n(t)|\le \frac{|\widetilde{\Pi}(nt)|
  M(nt)}{(nt \alpha(n))^{1/2}}+\frac{1}{(\alpha(n))^{1/2}}.
\end{displaymath}
Using this inequality, the fact that $\sup\limits_{0\le t\le
  1}(\cdot)\le\sup\limits_{0\le t\le
  \delta_n}(\cdot)+\sup\limits_{\delta_n\le t\le 1}(\cdot)$ and that
$\sup\limits_{0\le t\le 1}(\cdot)$ is a continuous functional, we
readily obtain~\ref{eq:mt}.

\paragraph{Step 4: Approximation of the initial process.}
Since $\Pi(t)$ is monotone, the Strong Law of Large Numbers implies
that for any $\epsilon, \delta\in(0,1)$ there is an integer
$N=N(\epsilon,\delta)$ such that for all $n\ge N$ one has
\begin{displaymath}
\P(\forall t\in[0,1] \  \ \exists \tau: |\tau-t|\le \delta,  \ \Pi(n\tau)= [nt]) \bydef  \P (A(n))\ge1-\epsilon,
\end{displaymath}
see Lemma~\ref{Lem2.2}. Here and below, $F$ stands for $R, U$ or
$M$. The relative compactness of the distributions $\{F_n^*\}_{n\geq
  n_0}$ implies that for any $\epsilon\in(0,1)$ and $\eta>0$ there
exist $\delta\in(0,1)$ and an integer $N_1=N_1(\epsilon,\eta)$ such
that for all $n\ge N_1$,
\begin{displaymath}
 \P(\sup\limits_{|t-\tau| \leq \delta}
 \left|F^*_n(\tau)-F^*_n(t)\right|\ge \eta) \le \epsilon. 
\end{displaymath}
Hence, since
\begin{displaymath}
\P(F_n(t)=F^*_n(\tau)|\Pi(n\tau)=[nt])=1,
\end{displaymath}
then for all $n\ge \max(N,N_1)$,
\begin{multline*}
\P\left(\sup\limits_{0 \leq t \leq 1} \left|F_n(t)-F^*_n(t)\right|\ge \eta\right)\le 
\P\left(\sup\limits_{0 \leq t \leq 1} \left|F_n(t)-F^*_n(t)\right|\ge
  \eta, A(n)\right)+\epsilon \\
\le \P\left(\sup\limits_{|t-\tau| \leq \delta} \left|F^*_n(\tau)-F^*_n(t)\right|\ge \eta\right)+\epsilon\le 2 \epsilon.
\end{multline*}
which proves Theorem~\ref{Th1}.

\paragraph{Acknowledgements.} MC's research is supported
by RSF Grant 17-11-01173-Ext. He also
acknowledges hospitality of Chalmers university where a part of this
work has been done. The authors are thankful to Sergey Foss
for his interest in this research and valuable comments and to the
anonymous reviewer for thorough reading and spotting some inaccuracies in the previous
version of the manuscript.

\section*{Appendix}
An explicit expression for the covariance between $R(\tau)$ and $R(t)$ can
be found in~\cite{Chebunin2}. Take $\tau\leq t$. The
\begin{multline*}
  c^*_{UU} (\tau,t)=\cov (U(\tau), U(t)) \\
  =\sum_{k=1}^{\infty} \P (\Pi_k(\tau), \Pi_k(t)\ \text{is odd} )-
  \P (\Pi_k(\tau)\ \text{is odd}) \P (\Pi_k(t)\ \text{is odd}) 
  \\
  = \frac14
  \sum_{k=1}^{\infty} \bigg(
  (1-e^{-2p_k\tau})(1+e^{-2p_k(t-\tau)})-(1-e^{-2p_k\tau})(1-e^{-2p_kt}) \bigg)
  \\
  =\frac14 \sum_{k=1}^{\infty} e^{-2p_k(t-\tau)}-e^{-2p_k(t+\tau)}
  =\frac12\E(U(t+\tau)-U(t-\tau)).
\end{multline*}
Hence (since $\frac{\beta(nt)}{\beta(n)} \to t^{\theta}$ as $n \to \infty$)
\begin{align*}
  c_{\upsilon\upsilon}(\tau,t)&=\lim_{n\to \infty} \frac{c_{UU}^*(n\tau,
    nt)}{\alpha(n)}
  =\Gamma(1-\theta)2^{\theta-2}((t+\tau)^\theta-(t-\tau)^\theta), \theta\in(0,1),
\\
c_{\upsilon\upsilon}(\tau,t)&=\lim_{n\to \infty} \frac{c_{UU}^*(n\tau, nt)}{n
  L^*(n)} =2\tau, \  \theta=1.
\end{align*}
cf.~\cite[Eq.~(21)]{Karlin}.

Next, 
\begin{multline*}
  c_{MM}^{*} (\tau,t)=\cov (M(\tau), M(t)) 
  \\
  =\sum_{k=1}^{\infty} \E (tp_i \one (\Pi_i(t)=0)- tp_i e^{-tp_i})
  (\tau p_i \one (\Pi_i(\tau)=0)- \tau p_i e^{-\tau p_i}) 
  \\
  = \sum_{k=1}^{\infty} t\tau p_i^2 e^{-tp_i}(1-e^{-\tau p_i})
  =\frac{2\tau}{t}\E R_{\Pi(t),2}-\frac{2t \tau}{(t+\tau)^2}\E R_{\Pi(t+\tau),2}.
\end{multline*}
Since $\frac{\alpha(nt)}{\alpha(n)} \to t^{\theta}$ when $n \to \infty$),
\begin{displaymath}
  c_{\mu\mu}(\tau,t)=\lim_{n\to \infty} \frac{c_{MM}^{*}(n\tau,
    nt)}{\alpha(n)}
  =\theta \Gamma(2-\theta)\left( \frac{\tau}{t^{1-\theta}}
    - \frac{t\tau}{(t+\tau)^{2-\theta}} \right),
\end{displaymath}
cf.~\cite[Eq.~(23)]{Karlin}.

Continuing,
\begin{multline*}
c^*_{RU} (\tau,t)=\cov (R(\tau), U(t)) =
\sum_{k=1}^{\infty} \cov(1-\one (\Pi_k(\tau)=0), \one (\Pi_k(t)\ \text{is odd}))
\\
 =
-\sum_{k=1}^{\infty} \cov(\one (\Pi_k(\tau)=0), \one (\Pi_k(t)\ \text{is odd}))
\\
= -
\sum_{k=1}^{\infty} \P (\Pi_k(\tau)=0, \Pi_k(t)\ \text{is odd} )-
\P (\Pi_k(\tau)=0) \P (\Pi_k(t)\ \text{is odd}) 
\\
= -\frac12
\sum_{k=1}^{\infty} \bigg( e^{-p_k\tau}(1-e^{-2p_k(t-\tau)})- e^{-p_k\tau}(1-e^{-2p_kt}) \bigg)
\\
= \frac12
\sum_{k=1}^{\infty} \bigg( e^{-p_k(2t-\tau)} - e^{-p_k(2t+\tau)}\pm 1\bigg)
=\frac12\E(R(2t+\tau)-R(2t-\tau)).
\end{multline*}
Similarly,
\begin{multline*}
c^*_{RU}(t,\tau)=\cov (R(t), U(\tau)) =
-\sum_{k=1}^{\infty} \cov(\one (\Pi_k(t)=0), \one (\Pi_k(\tau)\ \text{is odd}))
\\
= \frac12
\sum_{k=1}^{\infty}  e^{-p_kt}(1-e^{-2p_k\tau})
= \frac12
\sum_{k=1}^{\infty} \bigg( e^{-p_k t} - e^{-p_k(2\tau+t)}\pm 1\bigg)
\\
=\frac12\E(R(2t+\tau)-R(t)).
\end{multline*}

Because $\frac{\beta(nt)}{\beta(n)} \to t^{\theta}$ when $n \to
\infty$, for $\theta\in(0,1)$ we have that
\begin{align*}
c_{\rho\upsilon}(\tau,t) & =\lim_{n\to \infty} \frac{c_{RU}^*(n\tau,
                 nt)}{\alpha(n)}
                 =\Gamma(1-\theta)((2t+\tau)^{\theta} - (2t-\tau)^{\theta})/2,
\\
c_{\rho\upsilon}(t,\tau) &=\lim_{n\to \infty} \frac{c^*_{RU}(nt, n\tau)}{\alpha(n)}=\Gamma(1-\theta)((2t+\tau)^\theta - t^\theta)/2.
\end{align*}
For $\theta=1$ this reduces to
\begin{align*}
c_{\rho\upsilon}(\tau,t)&=\lim_{n\to \infty} \frac{c_{RU}^*(n\tau, nt)}{n L^*(n)}=\tau,
\\
c_{\rho\upsilon}(t,\tau)&=\lim_{n\to \infty} \frac{c^*_{RU}(nt, n\tau)}{nL^*(n)}=(t+\tau)/2.
\end{align*}
cf.~\cite[Th.~1]{Karlin}.

Next,
\begin{multline*}
c^*_{MU} (\tau,t)=\cov (M(\tau), U(t)) 
 =
\sum_{k=1}^{\infty} \tau p_k \cov(\one (\Pi_k(\tau)=0), \one (\Pi_k(t)\ \text{is odd}))
\\
= \frac12
\sum_{k=1}^{\infty}\tau p_k \bigg( e^{-p_k(2t+\tau)}-e^{-p_k(2t-\tau)}\bigg)
\\
=\frac{\tau}{2(2t+\tau)}\E M(2t+\tau)-\frac{\tau}{2(2t-\tau)}\E M(2t-\tau).
\end{multline*}
and
\begin{multline*}
c^*_{MU} (t,\tau)=\cov (M(t), U(\tau)) = \frac12
\sum_{k=1}^{\infty}t p_k \bigg(e^{-p_k(2\tau+t)}-e^{-p_k t}\bigg)
\\
=\frac{t}{2(2\tau+t)}\E M(2\tau+t)-\frac12\E M(t).
\end{multline*}

Finally,
\begin{multline*}
c^*_{RM} (\tau,t)=\cov (R(\tau), M(t)) =
\sum_{k=1}^{\infty} \cov(1-\one (\Pi_k(\tau)=0), t p_k\one (\Pi_k(t)=0))
\\
 =
-\sum_{k=1}^{\infty} t p_k \cov(\one \{\Pi_k(\tau)=0\}, \one \{\Pi_k(t)=0)\}
\\
= -\sum_{k=1}^{\infty}t p_k \bigg( e^{-p_k t}- e^{-p_k(\tau+t)} \bigg)
=\frac{t}{\tau+t}\E M(\tau+t)-\E M(t).
\end{multline*}
and
\begin{displaymath}
c^*_{RM} (t,\tau) =\cov (R(t), M(\tau))
=\frac{\tau}{\tau+t}\E M(\tau+t)-\frac{\tau}{t}\E M(t).
\end{displaymath}

Because $\frac{\alpha(nt)}{\alpha(n)} \to t^{\theta}$ when $n \to
\infty$, for $\theta\in(0,1)$ we obtain
\begin{align*}
  c_{\rho\mu}(\tau,t) & =\lim_{n\to \infty}
                   \frac{c^{*}_{RM} (n\tau, nt)}{\alpha(n)}=\theta
                   \Gamma(1-\theta)
                   \left( \frac{t}{(t+\tau)^{1-\theta}} - t^\theta \right),
\\
c_{\rho\mu}(t,\tau)&=\lim_{n\to \infty} \frac{c^{*}_{RM} (nt,
                n\tau)}{\alpha(n)}
                =\theta \Gamma(1-\theta)\left(
                \frac{\tau}{(t+\tau)^{1-\theta}}
                -\frac{\tau}{ t^{1-\theta}} \right),
\\
c_{\mu\upsilon}(\tau,t)&=\lim_{n\to \infty} \frac{c^{*}_{MU} (n\tau,
                nt)}{\alpha(n)}
                =\theta \Gamma(1-\theta)\left(
                \frac{\tau}{2(2t+\tau)^{1-\theta}}
                -\frac{\tau}{2(2t-\tau)^{1-\theta}} \right),
\\
c_{\mu\upsilon}(t,\tau)&=\lim_{n\to \infty} \frac{c^{*}_{MU} (nt,
                n\tau)}{\alpha(n)}
                =\theta \Gamma(1-\theta)\left(
                \frac{t}{2(2\tau+t)^{1-\theta}}
                -\frac{t^\theta}{2} \right),
\end{align*}
cf. \cite[Eq.~(23)]{Karlin}.

Clearly, $L(n)\to 0$ as $n\to\infty$. According to
\cite[Lem.~4]{Karlin}, in the case $\theta=1$ the function $L^*(n)\to
0$ when $n\to\infty$ is slowly varying and
\begin{equation}
\label{L^*}
\lim_{n\to\infty} \frac{L(n)}{L^*(n)}\bydef \lim_{n\to\infty}\delta_n=0.
\end{equation}
Therefore, in the case $\theta=1$,
\begin{align*}
c_{\rho\mu}(\tau,t)&=\lim_{n\to \infty} \frac{c^{*}_{RM} (n\tau,
                nt)}{\alpha(n)} \sqrt{\delta_n}=0, \quad
c_{\rho\mu}(t,\tau)&=\lim_{n\to \infty} \frac{c^{*}_{RM} (nt,
                n\tau)}{\alpha(n)}\sqrt{\delta_n}=0,\\
c_{\mu\upsilon}(\tau,t)&=\lim_{n\to \infty} \frac{c^{*}_{MU} (n\tau,
                nt)}{\alpha(n)}
                \sqrt{\delta_n}=0, \quad
c_{\mu\upsilon}(t,\tau)&=\lim_{n\to \infty} \frac{c^{*}_{MU} (nt,
                n\tau)}{\alpha(n)}
                \sqrt{\delta_n}=0.
\end{align*}


\end{document}